\renewenvironment{abstract}
{\small\vspace{-1em}
\begin{center}
\bfseries\abstractname\vspace{-.5em}\vspace{0pt}
\end{center}
\list{}{
\setlength{\leftmargin}{0.6in}%
\setlength{\rightmargin}{\leftmargin}}%
\item\relax}
{\endlist}
\declaretheorem[name=Theorem, numberwithin=section]{theorem}
\declaretheorem[name=Conjecture, sibling=theorem]{conjecture}
\declaretheorem[name=Claim, sibling=theorem]{claim}
\declaretheorem[name=Question, style=remark, sibling=theorem]{question}
\def\cqedsymbol{\ifmmode$\lrcorner$\else{\unskip\nobreak\hfil
\penalty50\hskip1em\null\nobreak\hfil$\lrcorner$
\parfillskip=0pt\finalhyphendemerits=0\endgraf}\fi} 
\newcommand{\cqed}{\renewcommand{\qed}{\cqedsymbol}}
\let\leq\leqslant
\let\geq\geqslant
\title{On a recolouring version of Hadwiger's conjecture\footnote{The authors are supported by ANR project GrR (\textsc{ANR-18-CE40-0032}).}}
\author[1]{Marthe Bonamy}
\author[2]{Marc Heinrich}
\author[3]{Clément Legrand-Duchesne}
\author[1]{Jonathan Narboni}
\affil[1]{CNRS, LaBRI, Université de Bordeaux, Bordeaux, France.}
\affil[2]{University of Leeds, United Kingdom.}
\affil[3]{Univ Rennes, F-35000 Rennes, France.}
\date{\today}
\begin{document}

\maketitle
\begin{center}
\begin{abstract}
We prove that for any $\varepsilon>0$, for any large enough $t$, there is a graph that admits no $K_t$-minor but admits a $(\frac32-\varepsilon)t$-colouring that is ``frozen'' with respect to Kempe changes, i.e. any two colour classes induce a connected component. This disproves three conjectures of Las Vergnas and Meyniel from 1981.
\end{abstract}
{\bf Keywords:} coloring, reconfiguration, graph minor, Hadwiger
\end{center}

\section{Introduction}\label{sec:intro}

In an attempt to prove the Four Colour Theorem in 1879, Kempe~\cite{kempe1879geographical} introduced an elementary operation on the colourings\footnote{Throughout this paper, all colourings are proper, i.e. no two vertices with the same colour are adjacent.} of a graph that became known as a Kempe change. Given a $k$-colouring $\alpha$ of a graph $G$, a \emph{Kempe chain} is a maximal bichromatic component\footnote{If a vertex of $G$ is coloured $1$ and has no neighbour coloured $2$ in $\alpha$, then it forms a Kempe chain of size $1$.}. A \emph{Kempe change} in $\alpha$ corresponds to swapping the two colours of a Kempe chain so as to obtain another $k$-colouring. Two $k$-colourings are \emph{Kempe equivalent} if one can be obtained from the other through a series of Kempe changes. 

The study of Kempe changes has a vast history, see e.g.~\cite{mohar2006kempe} for a comprehensive overview or~\cite{bonamy2019conjecture} for a recent result on general graphs. We refer the curious reader to the relevant chapter of a 2013 survey by Cereceda~\cite{van2013complexity}. Kempe equivalence falls within the wider setting of combinatorial reconfiguration, to which~\cite{van2013complexity} is also an excellent introduction. Perhaps surprisingly, Kempe equivalence has direct applications in approximate counting and applications in statistical physics (see e.g.\ \cite{sokal2000personal,mohar2009new} for nice overviews). Closer to graph theory, Kempe equivalence can be studied with a goal of obtaining a random colouring by applying random walks and rapidly mixing Markov chains, see e.g.~\cite{vigoda}.

Kempe changes were introduced as a mere tool, and are decisive in the proof of Vizing's edge colouring theorem~\cite{vizing1964estimate}. However, the equivalence class they define on the set of $k$-colourings is itself highly interesting. In which cases is there a single equivalence class? In which cases does every equivalence class contain a colouring that uses the minimum number of colours? Vizing conjectured in 1965~\cite{vizing1968some} that the second scenario should be true in every line graph, no matter the choice of $k$. Despite partial results~\cite{asratian2009note,casselgren}, this conjecture remains wildly open. 

In the setting of planar graphs, Meyniel proved in 1977~\cite{meyniel1978colorations} that all $5$-colourings form a unique Kempe equivalence class. The result was then extended to all $K_5$-minor-free graphs in 1979 by Las Vergnas and Meyniel~\cite{las1981kempe}. They conjectured the following, which can be seen as a reconfiguration counterpoint to Hadwiger's conjecture, though it neither implies it nor is implied by it.

\begin{conjecture}[Conjecture A in~\cite{las1981kempe}]\label{conj:A}
For every $t$, all the $t$-colourings of a graph with no $K_t$-minor form a single equivalence class.
\end{conjecture}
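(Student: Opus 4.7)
The plan is to induct on $|V(G)|$ for each fixed $t$, taking as base the deep case $t=5$ of Las Vergnas and Meyniel. Let $G$ be $K_t$-minor-free and let $\alpha, \beta$ be two $t$-colourings. I would first try to find a vertex $v$ whose neighbourhood misses some colour in the current colouring; a single-vertex Kempe change then frees $v$ to take that missing colour. Iterating, one hopes to bring both $\alpha$ and $\beta$ to a common value at $v$, delete $v$, apply induction to $G-v$, and then lift the resulting Kempe sequence back to $G$.

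The lifting step is delicate: each inductive Kempe change swaps a bichromatic component $C$ of $G-v$, but in $G$ the corresponding component may absorb $v$, so a preliminary single-vertex recolouring of $v$ is required to keep the swap confined. If this local book-keeping becomes unmanageable, I would fall back on a structural decomposition of $K_t$-minor-free graphs into pieces glued along small clique separators (Wagner for $t=5$, Robertson--Seymour more generally). Along each small separator $K$, one aligns the two colourings via Kempe changes performed inside a single piece and then applies induction to each piece. The alignment itself would reduce to a local lemma: two $t$-colourings that agree outside a bounded clique $K$ can be made to agree on $K$ as well, exploiting the slack from $|K|<t$.

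The hardest step, and the single point on which the whole plan depends, is a non-freezing lemma: no $t$-colouring of a $K_t$-minor-free graph can have the property that every pair of colour classes induces a connected subgraph. Such a \emph{frozen} colouring admits no non-trivial Kempe change, so the induction cannot even begin. I would try to derive non-freezing from a $K_t$-minor witness, obtained by contracting each colour class into a single super-vertex and arguing that total pairwise connectivity between the $t$ classes delivers the missing minor. The risk, and the reason I regard this as the critical obstruction, is that this contraction argument is overly optimistic: pairwise connectivity between colour classes does not obviously assemble into disjoint branch sets realising a $K_t$-minor, and any explicit frozen $t$-colouring of a $K_t$-minor-free graph would disprove the conjecture outright. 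The viability of the entire programme therefore hinges on this non-freezing statement, which is combinatorially far more delicate than it first appears.
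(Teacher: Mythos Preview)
The statement you are trying to prove is a \emph{conjecture} that the paper \emph{disproves}; there is no proof in the paper to compare against. You have, however, put your finger precisely on the obstruction. Your plan hinges on a non-freezing lemma: that no $t$-colouring of a $K_t$-minor-free graph can have every pair of colour classes inducing a connected subgraph. You correctly observe that such a colouring yields only a quasi-$K_t$-minor (pairwise connectivity of the classes), not an honest $K_t$-minor with disjoint connected branch sets, and you explicitly flag that ``any explicit frozen $t$-colouring of a $K_t$-minor-free graph would disprove the conjecture outright.''

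This is exactly what the paper does. For large $n$ it builds a random graph $G_n$ on $2n$ vertices in which the pairs $\{a_i,b_i\}$ form a frozen $n$-colouring (every two pairs span a $P_4$), yet with high probability $G_n$ has no $K_{(\frac23+\eta)n}$-minor. So the non-freezing lemma is false, and with it the entire inductive programme: in $G_n$ the frozen colouring is an isolated Kempe class, and no preliminary single-vertex recolouring or clique-separator decomposition can dislodge it. Your instinct that the passage from pairwise connectivity to a genuine $K_t$-minor was ``overly optimistic'' was correct; the gap between quasi-$K_t$-minors and $K_t$-minors is real and grows linearly in $t$.
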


They also proposed a related conjecture that is weaker assuming Hadwiger's conjecture holds.

\begin{conjecture}[Conjecture A' in~\cite{las1981kempe}]\label{conj:A'}
For every $t$ and every graph with no $K_t$-minor, every equivalence class of $t$-colourings contains some $(t-1)$-colouring.
\end{conjecture}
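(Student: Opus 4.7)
The final statement is Conjecture~A', which the abstract announces to be \emph{refuted}, so I will sketch a disproof aimed directly at the stronger claim advertised: for every $\varepsilon>0$ and every sufficiently large $t$, construct a graph $G$ with no $K_t$-minor together with a proper $k$-colouring $\alpha$, where $k=(\tfrac{3}{2}-\varepsilon)t$, such that for every pair of colours $i\ne j$ the bichromatic subgraph $G[\alpha^{-1}(i)\cup\alpha^{-1}(j)]$ is connected. Any Kempe change on such a \emph{frozen} colouring merely swaps two whole colour classes, so the Kempe equivalence class of $\alpha$ consists only of relabelings of $\alpha$ and in particular contains no colouring with fewer than $k$ colours. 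Since no $K_t$-minor implies no $K_k$-minor (as $k>t$), regarding $\alpha$ as a $k$-colouring directly contradicts Conjecture~A' at the parameter $k$.

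The construction I would first try is a calibrated blow-up. Fix $k$ disjoint independent sets $V_1,\dots,V_k$ of common size $r$, and between each pair $V_i,V_j$ insert a \emph{sparse connected} bipartite graph $B_{ij}\subseteq K_{r,r}$. The natural partition colouring $\alpha(v)=i$ for $v\in V_i$ is then automatically proper and frozen, so the whole question reduces to tailoring the $B_{ij}$ so that $G$ avoids a $K_t$-minor for $t\approx\tfrac{2}{3}k$. The simplest attempt takes $B_{ij}$ to be a spanning path of $K_{r,r}$, the sparsest connected bipartite graph on $V_i\cup V_j$. A useful refinement concentrates all edges incident to a given $V_i$ on a small ``interface'' set shared across many $B_{ij}$'s; this is the mechanism that should push the Hadwiger number down near $\tfrac{2}{3}k$ rather than leaving it at $k$.

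The main obstacle will be proving the $K_t$-minor-freeness with the precise constant $3/2$. Classical density tools (Mader, Kostochka--Thomason) applied to a graph of average degree $\Theta(k)$ only exclude $K_{\Theta(k/\sqrt{\log k})}$-minors, and moreover give \emph{existence} rather than nonexistence, so a structural argument tailored to the blow-up is required. The idea is that in a putative $K_t$-minor, each branch set either is a single vertex (forced whenever it lies inside a single $V_i$, since each $V_i$ is independent) whose degree bounds its fan-out across classes, or it crosses at least two classes and must travel along paths in some $B_{ij}$'s; a double counting of interface usage across the $B_{ij}$'s and across branch sets should cap the number of simultaneously realisable branch sets at $(\tfrac{2}{3}+\varepsilon')k$. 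If the precise $3/2$ constant proves elusive with direct counting, a fallback is to take a small kernel graph of size $\Theta(k)$, built for instance from an algebraic or projective incidence structure whose Hadwiger number is already known to meet the target ratio, and use it in place of the naive blow-up while still respecting that every inter-class bipartite graph must be connected.
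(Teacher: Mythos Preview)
Your high-level strategy---build a $k$-partite graph whose parts are the colour classes and whose inter-class bipartite graphs are connected, so the canonical colouring is frozen, then bound the Hadwiger number near $\tfrac{2}{3}k$---is exactly the paper's strategy. But the proposal is only a sketch, and the two concrete ideas that actually make the argument go through are absent.

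First, the paper takes $r=2$: each class is a pair $\{a_i,b_i\}$, and between any two classes one places three of the four possible edges, chosen uniformly at random. This is your ``spanning path of $K_{r,r}$'' at $r=2$, but randomised; a \emph{deterministic} choice of the missing edge would be fatal (if the non-edge is always $a_ia_j$, then $\{b_1,\dots,b_n\}$ is an $n$-clique). Your ``interface'' refinement is heading in the wrong direction: concentrating edges creates high-degree vertices, which tends to \emph{help} minors.

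Second, and this is the missing key lemma, the $\tfrac{2}{3}$ constant does not come from any delicate double count. In a putative $K_p$-minor, sort the branch sets by size into those of size $1$, size $2$, and size $\ge 3$. Bags of size $\ge 3$ contribute at most $\tfrac{2n}{3}$ simply because the graph has $2n$ vertices. Bags of size $1$ form a clique, and bags of size $2$ form a ``double'' clique; in both cases a straightforward union bound over the at most $(2n)^{O(n)}$ choices, against a success probability of $c^{\binom{\varepsilon n}{2}}$ with $c<1$, shows that with high probability neither type reaches $\varepsilon n$. Summing gives $p\le (\tfrac{2}{3}+o(1))n$. Your remark that Kostochka--Thomason points the wrong way is correct, but the replacement is not a structural interface argument: it is randomness plus the trivial vertex count at $r=2$.

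One minor point: for Conjecture~A' you also need the frozen colouring not to be the \emph{only} $k$-colouring up to relabeling. The paper handles this by observing that with high probability some quadruple of indices admits a local recolouring; you should note this step as well.
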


Here, we disprove both Conjectures~\ref{conj:A} and~\ref{conj:A'}, as follows.

\begin{theorem}\label{th:mainweak}
For every $\varepsilon>0$ and for any large enough $t$, there is a graph with no $K_t$-minor, whose $(\frac32-\varepsilon)t$-colourings are not all Kempe equivalent. 
\end{theorem}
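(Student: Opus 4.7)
My plan is to exhibit, for large enough $t$, a graph $G = G_{t,\varepsilon}$ together with a proper $k$-colouring (where $k = \lceil (\tfrac{3}{2} - \varepsilon)t \rceil$) that is frozen in the sense of the abstract: every pair of colour classes induces a connected bipartite subgraph, while $G$ has no $K_t$-minor. Since a frozen colouring admits only Kempe moves that globally swap two colour classes, its equivalence class consists of relabellings of itself; producing any other $k$-colouring of $G$ (for instance by recolouring a single vertex that has a free colour at its neighbourhood) then immediately witnesses non-equivalence, which is what the statement asks for.

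For the construction I would take the vertex set $V(G) = [k] \times [s]$ for a small constant $s$ (the natural first attempt being $s = 2$), with colour classes $C_i = \{i\} \times [s]$, and for each pair $\{i,j\}$ would add a spanning tree $T_{ij}$ of the bipartite graph $K_{C_i, C_j}$. Connectedness of each $T_{ij}$ gives the frozen property at once. The choice of the trees $T_{ij}$ is the real design parameter: they must be sparse enough that the union contains no $K_t$-minor, but sufficiently varied across pairs that no ``slice'' of the vertex set carries a large clique. A probabilistic choice (a uniformly random spanning tree of $K_{C_i,C_j}$, independently for each pair) or an explicit design based on an auxiliary algebraic object are the natural candidates.

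The core of the argument is the Hadwiger-number bound. Given a hypothetical $K_t$-model $(B_1,\dots,B_t)$, I would analyse how the branch sets interact with the colour classes and the tree edges. The factor $\tfrac{3}{2}$ strongly suggests that in the extremal case each branch set must consume at least $\tfrac{3}{2}$ vertices on average: a singleton branch set $\{v\}$ forces $v$ to sit in the tree-neighbourhood of many other branch sets, so if too many branch sets are singletons, the design is forced to contain a large clique (ruled out by the choice of trees); otherwise branch sets have size $\ge 2$, and a counting argument on the $sk$ vertices gives $t \le \tfrac{2}{3}k + o(k)$, which rearranges to the desired $k \ge (\tfrac{3}{2} - \varepsilon) t$.

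The main obstacle will be this Hadwiger-number upper bound. General tools such as the Kostochka--Thomason density bound are orders of magnitude too weak to give the tight constant $\tfrac{3}{2}$, so the argument has to be bespoke: one needs a structural or extremal lemma that exploits the precise sparsity pattern of the trees $T_{ij}$ and rules out any $K_t$-model ``beating'' the counting bound. Designing the patterns $T_{ij}$ so that this lemma goes through, while simultaneously preserving the frozen property, is where I expect the bulk of the technical work to lie.
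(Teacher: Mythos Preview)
Your construction with $s=2$ and a random spanning tree of $K_{C_i,C_j}$ for each pair is \emph{exactly} the paper's construction: a spanning tree of $K_{2,2}$ is a $P_4$, i.e.\ three of the four possible edges between $\{a_i,b_i\}$ and $\{a_j,b_j\}$, chosen uniformly. So you have found the right object.

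The gap is in your Hadwiger bound sketch. Your dichotomy ``singletons versus size $\ge 2$'' cannot deliver the constant $\tfrac23$: with $s=2$ there are $2k$ vertices, so branch sets of size at least $2$ only force $2t\le 2k$, i.e.\ $t\le k$, which is vacuous since $k\approx\tfrac32 t$. Your claimed ``counting argument on the $sk$ vertices gives $t\le \tfrac23 k+o(k)$'' is simply false as stated. The constant $\tfrac23$ comes from branch sets of size at least \emph{three}: those give $3t\le 2k$. What you are missing is a separate probabilistic argument that rules out a linear number of size-$2$ branch sets (``doubles''). This is the heart of the paper's proof: one shows that with high probability no collection of $\varepsilon k$ disjoint pairs is pairwise joined by an edge after contraction. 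The three-way split (singletons: probabilistic; doubles: probabilistic; triples: pure counting) is the bespoke lemma you anticipate needing, but your outline only covers the first and third cases.

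A smaller issue: your plan to exhibit a second $k$-colouring by ``recolouring a single vertex that has a free colour at its neighbourhood'' does not work here. In this graph every vertex $a_i$ has at least one neighbour in $\{a_j,b_j\}$ for every $j\neq i$ (three of the four cross edges are always present), so no vertex has a free colour. One has to recolour several vertices simultaneously; the paper does this by locating four indices $i,j,k,\ell$ with a suitable pattern of non-edges, which exists with high probability.
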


In fact, we prove that for every $\varepsilon>0$ and for any large enough $t$, there is a graph that does not admit a $K_t$-minor but admits a $(\frac32-\varepsilon)t$-colouring that is \emph{frozen}: Any pair of colours induces a connected component, so the resulting colour partition is an invariant under Kempe changes. To obtain Theorem~\ref{th:mainweak}, we then argue that the graph admits a colouring with a different colour partition. 

The notion of frozen $k$-colouring is related to that of \emph{quasi-$K_p$-minor}, introduced in \cite{las1981kempe}. A graph $G$ admits a $K_p$-minor if it admits $p$ non-empty, pairwise disjoint and connected \emph{bags} $B_1,\ldots,B_p \subset V(G)$ such that for any $i\neq j$, there is an edge between some vertex in $B_i$ and some vertex in $B_j$. For the notion of quasi-$K_p$-minor, we drop the restriction that each $B_i$ should induce a connected subgraph of $G$, and replace it with the condition that for any $i \neq j$, the set $B_i \cup B_j$ induces a connected subgraph of $G$. If the graph $G$ admits a frozen $p$-colouring, then it trivially admits a quasi-$K_p$-minor\footnote{One bag for each colour class.}, while the converse may not be true. 

If all $p$-colourings of a graph form a single equivalence class, then either there is no frozen $p$-colouring or there is a unique $p$-colouring of the graph up to colour permutation. The latter situation in a graph with no $K_p$-minor would disprove Hadwiger's conjecture, so Las Vergnas and Meyniel conjectured that there is no frozen $p$-colouring in that case. Namely, they conjectured the following. 


\begin{conjecture}[Conjecture C in~\cite{las1981kempe}]\label{conj:C}
For any $t$, any graph that admits a quasi-$K_t$-minor admits a $K_t$-minor.
\end{conjecture}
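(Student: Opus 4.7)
The plan is to \emph{disprove} Conjecture~\ref{conj:C}. The excerpt already notes that a frozen $p$-colouring of a graph $G$ automatically yields a quasi-$K_p$-minor: take the colour classes as bags $B_1,\dots,B_p$; each pair $B_i\cup B_j$ induces a connected bichromatic subgraph, for otherwise one of its components would be a Kempe chain whose swap would change the partition, contradicting frozenness. So Conjecture~\ref{conj:C} is essentially a corollary of the stronger form of Theorem~\ref{th:mainweak} in which the witness colouring is frozen.

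Concretely, I would fix any $\varepsilon<\tfrac12$ and take $t$ large enough that the frozen-colouring version of Theorem~\ref{th:mainweak} applies. Set $p=\lceil(\tfrac32-\varepsilon)t\rceil$ and let $G$ be the resulting graph. By the reduction above $G$ admits a quasi-$K_p$-minor. On the other hand $p>t$, so the absence of a $K_t$-minor in $G$ immediately precludes a $K_p$-minor (a $K_p$-minor would contain a $K_t$-minor). Thus $G$ refutes Conjecture~\ref{conj:C} at level $p$, and since $t$ can be taken arbitrarily large, the conjecture fails for arbitrarily large parameters.

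The real content is hidden in Theorem~\ref{th:mainweak}, and that is where the main obstacle sits. One must build, for $p$ close to $\tfrac32 t$, a graph reconciling two opposite requirements: \emph{(i)} for every one of the $\binom{p}{2}$ pairs of colour classes the bichromatic subgraph is connected, and \emph{(ii)} no $t$ subsets of vertices simultaneously form pairwise-touching connected branch sets. Requirement (i) naturally pushes the graph to be dense and highly intertwined, which tends to produce large clique minors, directly at odds with (ii). The construction must therefore \emph{share} the connections between colour classes through a carefully engineered scaffolding — presumably probabilistic or algebraic in flavour — in which the inter-class paths are short and numerous enough to keep every pair bichromatically connected, yet tangled in such a way that they cannot be re-routed into $t$ pairwise disjoint connected branch sets witnessing a $K_t$-minor. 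Once Theorem~\ref{th:mainweak} is established in this stronger form, the disproof of Conjecture~\ref{conj:C} above requires nothing further.
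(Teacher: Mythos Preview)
Your reduction is correct and matches the paper's route: the disproof of Conjecture~\ref{conj:C} is precisely Theorem~\ref{th:mainfull}, proved alongside the frozen-colouring version of Theorem~\ref{th:mainweak} from one random construction --- vertices $\{a_i,b_i\}_{i\le n}$ where for each $i\neq j$ a uniformly random three of the four cross-edges are present, so the pairs $\{a_i,b_i\}$ form a quasi-$K_n$-minor (equivalently a frozen $n$-colouring) deterministically, while a case split on bag sizes rules out any $K_{(2/3+\eta)n}$-minor with high probability. Your guess that the scaffolding is probabilistic is exactly right.
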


Conjecture~\ref{conj:C} was proved for increasing values of $t$, and is now
known to hold for
${t\leq10}$~\cite{jorgensen1994contractions,song2006extremal,kriesell21uniquely}. As discussed above, we strongly disprove Conjecture~\ref{conj:C} for large $t$. It is unclear how large $t$ needs to be for a counter-example.

\begin{theorem}\label{th:mainfull}
For every $\varepsilon>0$ and for any large enough $t$, there is a graph that admits a quasi-$K_t$-minor but does not admit a $K_{\left(\frac23+\varepsilon\right)t}$-minor. 
\end{theorem}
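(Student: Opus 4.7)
\medskip
\noindent\emph{Proof plan.} The plan is to deduce Theorem~\ref{th:mainfull} directly from the frozen-colouring strengthening of Theorem~\ref{th:mainweak} announced in the paragraph after its statement: for every $\delta > 0$ and every sufficiently large $t'$, there is a graph $G_{t'}$ with no $K_{t'}$-minor that admits a \emph{frozen} $(\frac32 - \delta)t'$-colouring. The excerpt already spells out the elementary translation that turns such a colouring into a quasi-$K_p$-minor: one takes each colour class as a bag $B_i$, and notes that by the very definition of frozen, the union $B_i \cup B_j$ induces a connected subgraph of $G$ for every pair $i \neq j$. Thus the same graph $G_{t'}$, together with its frozen colouring, simultaneously witnesses the absence of a $K_{t'}$-minor and the presence of a quasi-$K_p$-minor with $p = \lfloor (\frac32 - \delta) t' \rfloor$.

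It then only remains to match parameters. Given $\varepsilon > 0$, I would first pick $\delta > 0$ small enough that $\frac{1}{\frac32 - \delta} < \frac23 + \varepsilon$; this is possible by continuity since $\frac{1}{3/2} = \frac23$. For every target $t$ large enough, I would then choose an integer $t'$ with $\frac{t+1}{\frac32 - \delta} \leq t' \leq (\frac23 + \varepsilon)\, t$; the length of this interval grows linearly in $t$ by the choice of $\delta$, so such a $t'$ exists and tends to infinity with $t$. The graph $G_{t'}$ has no $K_{t'}$-minor, hence by monotonicity no $K_{(\frac23 + \varepsilon) t}$-minor; it also admits a quasi-$K_p$-minor with $p \geq t$, and discarding any $p - t$ bags yields the desired quasi-$K_t$-minor.

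I do not expect this deduction to be the main obstacle. The real content lies in Theorem~\ref{th:mainweak}, and more precisely in its frozen form: one must explicitly construct a graph that avoids $K_{t'}$ as a minor while supporting a colouring that no Kempe change can alter. Theorem~\ref{th:mainfull} merely repackages that construction through the ``colour class as bag'' encoding, and its only technical point is choosing $\delta = \delta(\varepsilon)$ small enough that the rounding in $\lfloor (\frac32 - \delta) t' \rfloor$ does not degrade the $\frac32$-to-$\frac23$ trade-off.
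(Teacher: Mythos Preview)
Your reduction is correct: the frozen strengthening of Theorem~\ref{th:mainweak} does imply Theorem~\ref{th:mainfull} via exactly the ``colour classes as bags'' translation and the parameter matching you carry out. The arithmetic with $\delta$ and the interval for $t'$ is fine.

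That said, you have inverted the paper's logical flow. In the paper, Theorem~\ref{th:mainfull} is not deduced from the frozen form of Theorem~\ref{th:mainweak}; rather, both are read off directly from the same random construction $G_n$. The sets $\{a_i,b_i\}$ visibly form a quasi-$K_n$-minor (and, equivalently, a frozen $n$-colouring), and the substantive work---Claims~\ref{lem:simple}, \ref{lem:double}, \ref{lem:triple} on simple, double, and triple minors---shows that with high probability $G_n$ has no $K_{(\frac23+\eta)n}$-minor. This already \emph{is} Theorem~\ref{th:mainfull}; the frozen colouring and the second colouring are then extra observations needed only for Theorem~\ref{th:mainweak}. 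So while your deduction is valid, it black-boxes precisely the statement that the paper proves directly, and your last paragraph correctly locates where the real content lies. If you intend to write a self-contained proof of Theorem~\ref{th:mainfull}, you would end up reproducing the Section~2 argument, at which point the detour through frozen colourings becomes unnecessary.
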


Trivially, every graph that admits a quasi-$K_{2t}$-minor admits a $K_t$-minor. We leave the following two open questions, noting that $\frac23 \geq c\geq \frac12$ and $c'\geq \frac32$.

\begin{question}\label{qu:quasi}
What is the infimum $c$ such that for any large enough $t$, there is a graph that admits a quasi-$K_t$-minor but no $K_{ct}$-minor?
\end{question}


\begin{question}\label{qu:Hadwiger}
Is there a constant $c'$ such that for every $t$, all the $c'\cdot t$-colourings of a graph with no $K_t$-minor form a single equivalence class?
\end{question}

In the 1980's,~\cite{kostochka1982minimum,kostochka1984lower}
and~\cite{thomason1984extremal} proved independently that a graph with no
$K_t$-minor has degeneracy $O(t\sqrt{\log t})$, with current best hidden factor in~\cite{kelly2020local}. Since all the $k$-colorings of $d$-degenerate graphs are
equivalent for $k > d$~\cite{las1981kempe}, this gives the best upper bound
known so far for Question~\ref{qu:Hadwiger}.


\section{Construction}

Let $n \in \mathbb{N}$ and let $\varepsilon>0$. We build a random graph $G_n$ on vertex set $\{a_1, b_1, a_2, b_2\ldots,a_n,b_n\}$: for every $i \neq j$ independently, we select one pair uniformly at random among $\{(a_i,a_j),(a_i,b_j),(b_i,a_j),(b_i,b_j)\}$ and add the three other pairs as edges to the graph $G_n$.


Note that the sets $\{a_i,b_i\}_{1 \leq i \leq n}$ form a quasi-$K_n$-minor, as for every $i\neq j$, the set $\{a_i,b_i,a_j,b_j\}$ induces a path on four vertices in $G_n$, hence is connected.

Our goal is to argue that if $n$ is sufficiently large then with high probability the graph $G_n$ does not admit any $K_{\left(\frac23 +\varepsilon\right)n}$-minor. This will yield Theorem~\ref{th:mainfull}. To additionally obtain Theorem~\ref{th:mainweak}, we need to argue that with high probability, $G_n$ admits an $n$-colouring with a different colour partition than the natural one, where the colour classes are of the form $\{a_i,b_i\}$. Informally, we can observe that each of  $\{a_1,\ldots,a_n\}$ and $\{b_1,\ldots,b_n\}$ induces a graph behaving like a graph in $\mathcal{G}_{n,\frac34}$ (i.e. each edge exists with probability $\frac34$) though the two processes are not independent. This argument indicates that $\chi(G_n)=O(\frac{n}{\log n})$, but we prefer a simpler, more pedestrian approach.

Assume that for some $i,j,k,\ell$, none of the edges $a_ib_j$, $a_jb_k$, $a_kb_\ell$ and $a_\ell b_i$ exist. Then the graph $G_n$ admits an $n$-colouring $\alpha$ where $\alpha(a_p)=\alpha(b_p)=p$ for every $p \not\in \{i,j,k,\ell\}$ and $\alpha(a_i)=\alpha(b_j)=i$, $\alpha(a_j)=\alpha(b_k)=j$, $\alpha(a_k)=\alpha(b_\ell)=k$ and $\alpha(a_\ell)=\alpha(b_i)=\ell$ (see Figure~\ref{fig:clement}). Since every quadruple $(i,j,k,\ell)$ has a positive and constant probability of satisfying this property, $G_n$ contains such a quadruple with very high probability when $n$ is large.

\begin{figure}[!h]
\center
\includegraphics[width=.5\textwidth]{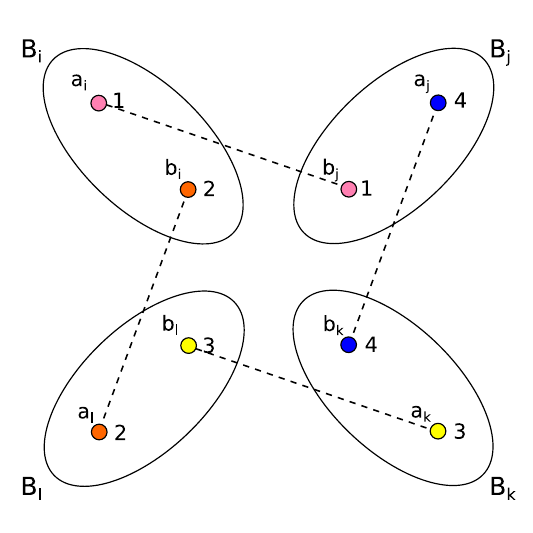}
\caption{\label{fig:clement}A different $n$-colouring given an appropriate quadruple.}
\end{figure}


We are now ready to prove that the probability that $G_n$ admits a $K_{\left(\frac23+\varepsilon\right)n}$-minor tends to $0$ as $n$ grows to infinity. We consider three types of $K_p$-minors in $G$, depending on the size of the bags involved. If every bag is of size $1$, we say that it is a \emph{simple} $K_p$-minor -- in fact, it is a subgraph. If every bag is of size $2$, we say it is a \emph{double} $K_p$-minor. If every bag is of size at least $3$, we say it is a \emph{triple} $K_p$-minor. We prove three claims, as follows.

\begin{claim}\label{lem:simple}
For any $\delta >0$, $\mathbb{P}( G_n \textrm{ contains a simple }K_{\delta n}\textrm{-minor}) \rightarrow 0$ as $n \rightarrow \infty$.
\end{claim}
\begin{claim}\label{lem:double}
For any $\delta >0$, $\mathbb{P}( G_n \textrm{ contains a double }K_{\delta n}\textrm{-minor}) \rightarrow 0$ as $n \rightarrow \infty$.
\end{claim}
\begin{claim}\label{lem:triple}
$G_n$ does not contain a triple $K_{\frac23n+1}$-minor.
\end{claim}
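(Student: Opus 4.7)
The plan is to use a direct vertex-counting argument, since the definition of a triple minor forces every bag to contain at least three vertices. The graph $G_n$ has exactly $2n$ vertices by construction, so any collection of pairwise disjoint bags of size at least $3$ can contain at most $\lfloor 2n/3 \rfloor$ bags.

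Concretely, suppose for contradiction that $G_n$ admits a triple $K_p$-minor with $p = \frac{2}{3}n + 1$, and let $B_1,\ldots,B_p$ denote the bags. By the assumption of a triple minor, $|B_i| \geq 3$ for every $i$, and by the definition of a minor the bags are pairwise disjoint subsets of $V(G_n)$. Summing sizes yields
\[
2n \;=\; |V(G_n)| \;\geq\; \sum_{i=1}^{p} |B_i| \;\geq\; 3p \;=\; 2n + 3,
\]
a contradiction. Hence no such minor exists.

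There is no real obstacle here: the bound is structural and does not depend on the random edges of $G_n$ at all, only on the vertex count $2n$. The content of the overall result is thus carried entirely by Claims~\ref{lem:simple} and~\ref{lem:double}, which handle the cases where some bags are small and the random edge structure actually plays a role; Claim~\ref{lem:triple} simply contributes the deterministic observation that, once every bag is forced to be large, the vertex budget alone caps the minor size at $\tfrac{2}{3}n$.
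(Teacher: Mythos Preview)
Your proof is correct and follows exactly the same approach as the paper: a direct vertex count using that $G_n$ has $2n$ vertices while a triple $K_k$-minor needs at least $3k$ of them. The paper's version is terser but the content is identical.
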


Claims~\ref{lem:simple},~\ref{lem:double} and~\ref{lem:triple} are proved in Sections~\ref{sect:simple},~\ref{sect:double} and~\ref{sect:triple}, respectively. If a graph admits a $K_p$-minor, then in particular it admits a simple $K_a$-minor, a double $K_b$-minor and a triple $K_c$-minor such that $a+b+c \geq p$. Setting $\delta$ appropriately and combining Claims~\ref{lem:simple},~\ref{lem:double} and~\ref{lem:triple}, we derive the desired conclusion.

\subsection{No large simple minor}\label{sect:simple}

\begin{proof}[Proof of Claim~\ref{lem:simple}]
Let $S$ be a subset of $k$ vertices of $G_n$. The probability that $S$ induces a clique in $G_n$ is at most 
$\left(\frac34\right)^{k \choose 2}$. 
Indeed, if $\{a_i,b_i\}\subseteq S$ for some $i$, then the probability is $0$. Otherwise, $|S  \cap \{a_i,b_i\}| \leq 1$ for every $i$, so we have $G[S] \in \mathcal{G}_{k,\frac34}$, i.e. edges exist independently with probability $\frac34$. Therefore, the probability that $S$ induces a clique is $\left(\frac34\right)^{k \choose 2}$. 

By union-bound, the probability that some subset on $k$ vertices induces a clique is at most ${2n \choose k}\cdot \left(\frac34\right)^{k \choose 2}$. For any $\delta>0$, we note that ${2n \choose \delta n}\leq 2^{2n}$. Therefore, the probability that $G_n$ contains a simple $K_{\delta n}$-minor is at most $2^{2n}\cdot \left(\frac34\right)^{{\delta n} \choose 2}$, which tends to $0$ as $n$ grows to infinity.\cqed
\end{proof}

\subsection{No large double minor}\label{sect:double}

\begin{proof}[Proof of Claim~\ref{lem:double}]

Let $S'$ be a subset of $k$ pairwise disjoint pairs of vertices in $G_n$ such that for every $i$, at most one of $\{a_i,b_i\}$ is involved in $S'$.

We consider the probability that $G_n/_{S'}$ induces a clique, where $G_n/_{S'}$ is defined as the graph obtained from $G_n$ by considering only vertices involved in some pair of $S'$ and identifying the vertices in each pair.

We consider two distinct pairs $(x_1,y_1),(x_2,y_2)$ of $S'$. Without loss of generality, $\{x_1,x_2,y_1,y_2\}=\{a_i,a_j,a_k,a_\ell\}$ for some $i,j,k,\ell$. The probability that there is an edge between $\{x_1,y_1\}$ and $\{x_2,y_2\}$ is $1-\left(\frac14\right)^4$. In other words, $\mathbb{P}(E((x_1,y_1),(x_2,y_2))= \emptyset)=\left(\frac14\right)^4$ and since at most one of $\{a_i, b_i\}$ is involved in $S'$ for all $i$, all such events are mutually independent. Therefore, the probability that $S'$ yields a quasi-$K_{|S'|}$-minor is $\left(1-\left(\frac14\right)^4\right)^{|S'| \choose 2}$.

For any $\delta' >0$, the number of candidates for $S'$ is at most ${2n \choose {2\delta' n}}$ (the number of choices for a ground set of $2\delta' n$ vertices) times $(2 \delta' n)!$ (a rough upper bound on the number of ways to pair them). Note that ${2n \choose {2\delta' n}}\cdot (2 \delta' n)!  \leq (2n)^{2 \delta' n}$. 
We derive that the probability that there is a set $S'$ of size $\delta' n$ such that $G_n/_{S'}=K_{|S'|}$ is at most $(2n)^{2 \delta' n} \cdot \left(1-\left(\frac14\right)^4\right)^{{\delta' n} \choose 2}$, which tends to $0$ as $n$ grows large.

Consider a double $K_k$-minor $S$ of $G_n$. Note that no pair in $S$ is equal to $\{a_i,b_i\}$ (for any $i$), as every bag induces a connected subgraph in $G_n$. We build greedily a maximal subset $S'\subseteq S$ such that $S'$ involves at most one vertex out of every set of type $\{a_i,b_i\}$. Note that $|S'|\geq \frac{|S|}3$. Therefore, by taking $\delta'=\frac{\delta}3$ in the above analysis, we obtain that the probability that there is a set $S$ of $\delta n$ pairs that induces a quasi-$K_{|S|}$-minor tends to $0$ as $n$ grows large.\cqed

\end{proof}

\subsection{No large triple minor}\label{sect:triple}

\begin{proof}[Proof of Claim~\ref{lem:triple}]
The graph $G_n$ has $2n$ vertices, and a triple $K_k$-minor involves at least $3k$ vertices. It follows that if $G_n$ contains a triple $K_k$-minor then $k \leq \frac{2n}3$.\cqed
\end{proof}

\section*{Addendum}

Following the submission and advertisement of this paper, we learned that the exact same construction had been independently discovered in~\cite{counterexamples1993Reed} and~\cite{bohme2010hadwiger}. Its authors and the rest of the community were however seemingly unaware of the implications on quasi-minors and Kempe equivalence that we discuss here.

\section*{Acknowledgements}

The authors thank Vincent Delecroix for helpful discussions, as well as Louis Esperet, Zi-Xia Song and
Raphael Steiner for pointing out additional relevant references,
specifically~\cite{counterexamples1993Reed},~\cite{kriesell21uniquely,song2006extremal} and~\cite{bohme2010hadwiger}.


\bibliographystyle{alpha}
\bibliography{biblio}

\end{document}